
\documentclass{amsart}
\usepackage{amssymb}
\usepackage{graphicx}
\usepackage{amscd}
\usepackage{amsmath}
\usepackage{amsfonts}

\setcounter{MaxMatrixCols}{10}

\providecommand{\U}[1]{\protect\rule{.1in}{.1in}}
\providecommand{\U}[1]{\protect\rule{.1in}{.1in}}
\newtheorem{theorem}{Theorem}
\theoremstyle{plain}

\newtheorem{lemma}{Lemma}

\numberwithin{equation}{section}

\input{tcilatex}

\begin{document}
\title{Multiplicative semiderivations on ideals in semiprime rings}
\author{\"{O}znur G\"{o}lba\c{s}\i}
\address{Cumhuriyet University, Faculty of Science, Department of
Mathematics, Sivas - TURKEY}
\email{ogolbasi@cumhuriyet.edu.tr}
\author{Onur A\u{g}\i rt\i c\i}
\email{\"{y} }
\subjclass[2000]{16N60, 16W25.}
\keywords{semiprime ring, ideal, semiderivation, multiplicative derivation.}

\begin{abstract}
Let $R$ be a semiprime ring and $I$ is a nonzero ideal of $R.$ A mapping $%
d:R\rightarrow R$ is called a multiplicative semiderivation if there exists
a function $g:R\rightarrow R$ such that (i) $%
d(xy)=d(x)g(y)+xd(y)=d(x)y+g(x)d(y)$ and (ii) $d(g(x))=g(d(x))$ hold for all 
$x,y\in R.$ In the present paper, we shall prove that $[x,d(x)]=0,$ for all $%
x\in I$ if any one of the following holds: i) $d([x,y])=0,$ ii) $d(xoy)=0,$
iii) $d(xy)\pm xy=0,$ iv) $d(xy)\pm yx=0,$ v) $d(x)d(y)\pm xy=0,$ vi) $%
d(x)d(y)\pm yx=0,$ vii) $d(xy)=d(x)d(y),$ viii) $d(xy)=d(y)d(x),$ for all $%
x,y\in I.$
\end{abstract}

\maketitle

\section{Introduction}

Let $R$ will be an associative ring with center $Z.$ For any $x,y\in R$ the
symbol $[x,y]$ represents the Lie commutator $xy-yx$ and the Jordan product $%
xoy=xy+yx.$ Recall that a ring $R$ is prime if for $x,y\in R,$ $xRy=0$
implies either $x=0$ or $y=0$ and $R$ is semiprime if for $x\in R,$ $xRx=0$
implies $x=0.$ It is clear that every prime ring is semiprime ring.

The study of derivations in prime rings was initiated by E. C. Posner in 
\cite{posner}. An additive mapping $d:R\rightarrow R$ is called a derivation
if $d(xy)=d(x)y+xd(y)$ holds for all $x,y\in R.$ In \cite{bergen}, J. Bergen
has introduced the notion of semiderivation of a ring $R$ which extends the
notion of derivations of a ring $R.$ An additive mapping $d:R\rightarrow R$
is called a semiderivation if there exists a function $g:R\rightarrow R$
such that (i) $d(xy)=d(x)g(y)+xd(y)=d(x)y+g(x)d(y)$ and (ii) $%
d(g(x))=g(d(x)) $ hold for all $x,y\in R.$\ In case $g$ is an identity map
of $R,$ then all semiderivations associated with $g$ are merely ordinary
derivations. On the other hand, if $g$ is a homomorphism of $R$ such that $%
g\neq 1,$ then $f=g-1$ is a semiderivation which is not a derivation. In
case $R$ is prime and $d\neq 0,$ it has been shown by Chang \cite{chang}
that $g$ must necessarily be a ring endomorphism.

Many authors have studied commutativity of prime and semiprime rings
admitting derivations, generalized derivations and semiderivations which
satisfy appropriate algebraic conditions on suitable subsets of the rings.
In \cite{daif-1}, the notion of multiplicative derivation was introduced by
Daif motivated by Martindale in \cite{martindale}. $d:R\rightarrow R$ is
called a multiplicative derivation if $d(xy)=d(x)y+xd(y)$ holds for all $%
x,y\in R.$ These maps are not additive. In \cite{goldman}, Goldman and Semrl
gave the complete description of these maps. We have $R=C[0,1],$ the ring of
all continuous (real or complex valued) functions and define a mapping $%
d:R\rightarrow R$ such as%
\begin{equation*}
d(f)(x)=\left\{ 
\begin{array}{c}
f(x)\log \left\vert f(x)\right\vert ,\text{ \ \ \ \ }f(x)\neq 0 \\ 
0,\text{ \ \ \ \ \ \ \ \ \ \ \ \ \ \ \ \ \ \ \ \ \ otherwise}%
\end{array}%
\right\} .
\end{equation*}%
It is clear that $d$ is multiplicative derivation, but $d$ is not additive.
Recently, some well-known results concerning semiprime rings have been
proved for multiplicative derivations.

Inspired by the definition multiplicative derivation, we can define the
notion of multiplicative semiderivation such as: A mapping $d:R\rightarrow R$
is called a multiplicative semiderivation if there exists a function $%
g:R\rightarrow R$ such that (i) $d(xy)=d(x)g(y)+xd(y)=d(x)y+g(x)d(y)$ and
(ii) $d(g(x))=g(d(x))$ hold for all $x,y\in R.$\ Hence, one may observe that
the concept of multiplicative semiderivations includes the concept of
derivations and the left multipliers (i.e., $d(xy)=d(x)y$ for all $x,y\in R$%
). So, it should be interesting to extend some results concerning these
notions to multiplicative semiderivations. Every derivation is a
multiplicative semiderivation. But the converse is not ture in general.

In \cite{daif-2}, Daif and Bell proved that $R$ is semiprime ring, $U$ is a
nonzero ideal of $R$ and $d$ is a derivation of $R$ such that $d([x,y])=\pm
\lbrack x,y],$ for all $x,y\in U,$ then $R$ contains a nonzero central
ideal. On the other hand, in \cite{rehman}, Ashraf and Rehman showed that $R$
is prime ring with a nonzero ideal $U$ of $R$ and $d$ is a derivation of $R$
such that $d(xy)\pm xy\in Z,$ for all $x,y\in U,$ then $R$ is commutative.
Also, Bell and Kappe proved that a derivation $d$ of a prime ring $R$ acts
as homomorphism or anti-homomorphism on a nonzero right ideal of $R,$ then $%
d=0$ on $R$ in \cite{bell}. Motivated by these works, we consider similar
situations for multiplicative semiderivation on nonzero ideal of semiprime
ring $R.$

\section{Results}

Throughout the paper, $R$ will be semiprime ring and $I$\ be a nonzero ideal
of $R$ and $d$ a multiplicative semiderivation of $R$ with associated a
nonzero epimorphism $g$ of $R.$

Also, we will make some extensive use of the basic commutator identities:

\begin{center}
$[x,yz]=y[x,z]+[x,y]z$

$[xy,z]=[x,z]y+x[y,z]$

$xo(yz)=(xoy)z-y[x,z]=y(xoz)+[x,y]z$

$(xy)oz=x(yoz)-[x,z]y=(xoz)y+x[y,z].$
\end{center}

\begin{lemma}
\label{lemma1}\cite[Lemma 2.1]{samman}Let $R$ be a semiprime ring, $I$ be a
nonzero ideal of $R$ and $a\in R$ such that $axa=0,$ for all $x\in I,$ then $%
a=0.$
\end{lemma}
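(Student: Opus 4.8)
The plan is to exploit the fact that $I$ is a \emph{two-sided} ideal in order to enlarge the relation $axa=0$, and then to invoke semiprimeness in essentially the only form available, namely $cRc=0\Rightarrow c=0$.

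First I would substitute $xs$ for $x$, where $s$ ranges over $R$; since $I$ is a right ideal, $xs\in I$, so the hypothesis gives $a(xs)a=0$, i.e.
$$axsa=0\qquad(x\in I,\ s\in R).$$
The use of this is that, for a fixed $x\in I$, one can now regroup
$$(ax)\,s\,(ax)=(axsa)\,x=0\qquad(s\in R),$$
so $(ax)R(ax)=0$ and hence $ax=0$ by semiprimeness. As $x\in I$ was arbitrary, $aI=0$. Replacing instead $x$ by $sx\in I$ (using that $I$ is a left ideal) and running the mirror-image computation, $(xa)\,s\,(xa)=x(asxa)=0$, so $(xa)R(xa)=0$, whence $xa=0$ and $Ia=0$.

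It then remains to pass from $aI=Ia=0$ to $a=0$. Here I would use the ideal property once more: for $r\in R$ the element $ar$ still lies in $I$, so $ara\in Ia=0$; thus $aRa=0$ and a final appeal to semiprimeness gives $a=0$, which is the desired conclusion.

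The routine part is the pair of substitutions together with the regrouping $(ax)s(ax)=(axsa)x$ that brings the expression into the shape $cRc$ so that semiprimeness applies. The step I expect to be the real obstacle — and the one to be carried out with the most care — is the final one: extracting $a=0$ rather than merely the weaker conclusion $aI=Ia=0$. This is precisely where the interaction of $a$ with the ideal $I$ has to be used in full, and it is the place where, if any hypothesis in the statement needs to be read carefully, it will be needed.
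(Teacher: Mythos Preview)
The paper does not supply its own proof of this lemma; it merely cites \cite{samman}. So there is no ``paper's proof'' to compare against, and the question is simply whether your argument is correct.

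Your derivation of $aI=0$ and $Ia=0$ is fine: the substitutions $x\mapsto xs$ and $x\mapsto sx$ together with the regroupings $(ax)s(ax)=(axsa)x$ and $(xa)s(xa)=x(asxa)$ are exactly right. The problem is the last paragraph. You write ``for $r\in R$ the element $ar$ still lies in $I$'', but nothing in the hypotheses puts $a$ inside $I$; the statement only assumes $a\in R$. So the implication $ar\in I$ is unjustified, and without it you cannot conclude $aRa=0$. In fact the lemma \emph{as printed} is false: take $R=\mathbb{Z}\times\mathbb{Z}$ (a semiprime ring), $I=\mathbb{Z}\times\{0\}$ (a nonzero ideal), and $a=(0,1)$. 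Then $a(n,0)a=(0,0)$ for every $(n,0)\in I$, yet $a\neq 0$. Your own closing remark, that this is ``the place where, if any hypothesis in the statement needs to be read carefully, it will be needed'', is exactly on target.

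What is actually true, and what the paper actually uses, is the version with $a\in I$: in every application (Theorems \ref{theorem1}, \ref{theorem5}, \ref{theorem7}, \ref{theorem8}) the element playing the role of $a$ is $[d(x),x]$ with $x\in I$, which does lie in $I$. Under that stronger hypothesis your argument goes through cleanly: from $aI=0$ and $a\in I$ one gets, for any $r\in R$, $ra\in I$ and hence $ara=a(ra)\in aI=0$, so $aRa=0$ and semiprimeness finishes the job. So your approach is correct once the statement is repaired to read $a\in I$; it simply cannot succeed for the stated $a\in R$, because that version is false.
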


\begin{theorem}
\label{theorem1}Let $R$ be a semiprime ring, $I$ be a nonzero ideal of $R$
and $d$ be a multiplicative semiderivation associated with a nonzero
epimorphism $g$ of $R$. If $d([x,y])=0,$ for all $x,y\in I,$ then $%
[x,d(x)]=0,$ for all $x\in I.$
\end{theorem}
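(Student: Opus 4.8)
The plan is to extract from the single hypothesis $d([x,y])=0$ a pair of one-sided annihilator identities by applying $d$ to carefully chosen products and using the two expansions of $d$ on a product, and then to amplify one of them into a relation of the form $[x,d(x)]\,y\,d(x)=0$, at which point \lemref{lemma1} finishes the job.

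First I would substitute $y\mapsto yx$ in the hypothesis. Since $I$ is a two-sided ideal, $yx\in I$, and $[x,yx]=[x,y]x$, so $d([x,y]x)=0$. Expanding via $d(ab)=d(a)g(b)+ad(b)$ gives $d([x,y])g(x)+[x,y]d(x)=0$, and since $d([x,y])=0$ this yields
\[
[x,y]\,d(x)=0\qquad\text{for all }x,y\in I .
\]
Symmetrically, substituting $y\mapsto xy$ gives $[x,xy]=x[x,y]$, hence $d(x[x,y])=0$, and expanding this time via the other form $d(ab)=d(a)b+g(a)d(b)$ produces
\[
d(x)\,[x,y]=0\qquad\text{for all }x,y\in I .
\]

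The key step is then to replace $y$ by $d(x)y$ in the first identity — legitimate because $d(x)y\in I$ — and to use $[x,d(x)y]=d(x)[x,y]+[x,d(x)]y$, which gives
\[
d(x)\,[x,y]\,d(x)+[x,d(x)]\,y\,d(x)=0 .
\]
The first summand vanishes by the second annihilator identity above, leaving $[x,d(x)]\,y\,d(x)=0$ for all $x,y\in I$. To conclude, fix $x\in I$, set $a=[x,d(x)]$, and observe that for every $y\in I$,
\[
a\,y\,a = a\,y\,\big(x\,d(x)-d(x)\,x\big) = a\,(yx)\,d(x) - \big(a\,y\,d(x)\big)x = 0 ,
\]
the first term because $yx\in I$ and the second because $a\,y\,d(x)=0$; hence $a\,y\,a=0$ for all $y\in I$, and \lemref{lemma1} forces $a=[x,d(x)]=0$.

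I do not anticipate a serious obstacle in any individual computation; the step requiring the most care is the bookkeeping — keeping each substituted element inside $I$ (which is precisely where a two-sided ideal, rather than a one-sided one, is used) and, at each expansion of $d$ on a product, choosing whichever of $d(ab)=d(a)g(b)+ad(b)$ and $d(ab)=d(a)b+g(a)d(b)$ puts the surviving $g$-term onto a commutator, so that it is killed by the hypothesis and drops out. (Along this route the surjectivity of $g$ does not seem to be needed.)
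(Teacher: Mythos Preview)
Your argument is correct. Both your proof and the paper's open identically---substitute $y\mapsto yx$, expand $d([x,y]x)$, and obtain $[x,y]\,d(x)=0$---and both close by squeezing out $[x,d(x)]\,y\,[x,d(x)]=0$ and invoking \lemref{lemma1}. The middle differs: the paper replaces $y$ by $ry$ with $r\in R$ to get $[x,r]\,y\,d(x)=0$, then juggles a further $y\mapsto yx$ and a right multiplication by $x$ to reach $[x,r]\,y\,[x,d(x)]=0$, finally specializing $r=d(x)$. You instead derive the mirror identity $d(x)\,[x,y]=0$ from the \emph{other} expansion $d(ab)=d(a)b+g(a)d(b)$ and use it to kill the cross term after substituting $y\mapsto d(x)y$, arriving at $[x,d(x)]\,y\,d(x)=0$ and then unfolding $a\,y\,a$. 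Your route is a touch more symmetric and makes visible that both defining identities for a semiderivation are being used; the paper's route needs only the single expansion $d(ab)=d(a)g(b)+a\,d(b)$ but compensates with the extra $r\in R$ substitution. Neither proof uses the surjectivity of $g$, as you observe.
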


\begin{proof}
By the hypothesis, we have 
\begin{equation}
d([x,y])=0,\text{ for\ all }x,y\in I.  \label{e1}
\end{equation}%
Replacing $yx$ by $y$ in (\ref{e1}) and using this, we get%
\begin{equation}
\lbrack x,y]d(x)=0,\text{ for all }x,y\in I.  \label{e2}
\end{equation}%
Writting $ry,r\in R$ for $y$ in (\ref{e2}) and using (\ref{e2}), we obtain
that 
\begin{equation}
\lbrack x,r]yd(x)=0,\text{ for all }x,y\in I,r\in R.  \label{e3}
\end{equation}%
Taking $yx$ by $y$ in (\ref{e3}), we have 
\begin{equation*}
\lbrack x,r]yxd(x)=0,\text{ for all }x,y\in I,r\in R.
\end{equation*}%
Right multipliying (\ref{e3}) with $x,$ we get%
\begin{equation*}
\lbrack x,r]yd(x)x=0,\text{ for all }x,y\in I,r\in R.
\end{equation*}%
Subtracting the last two equations, we arrive at%
\begin{equation*}
\lbrack x,r]y[x,d(x)]=0,\text{ for all }x,y\in I,r\in R.
\end{equation*}%
Replacing $d(x)$ by $r$ in this equation, we have%
\begin{equation*}
\lbrack x,d(x)]y[x,d(x)]=0,\text{ for all }x,y\in I.
\end{equation*}%
By Lemma 1, we get $[x,d(x)]=0,$ for all $x\in I.$ The proof is completed.
\end{proof}

\begin{theorem}
\label{theorem2}Let $R$ be a semiprime ring, $I$ be a nonzero ideal of $R$
and $d$ be a multiplicative semiderivation associated with a nonzero
epimorphism $g$ of $R$. If $d(xoy)=0,$ for all $x,y\in I,$ then $[x,d(x)]=0,$
for all $x\in I.$
\end{theorem}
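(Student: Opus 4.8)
The plan is to run the argument of \thmref{theorem1} almost verbatim, the only genuinely new work being to convert the Jordan-product hypothesis into the commutator identity that powered that proof. We start from
\begin{equation*}
d(xoy)=0,\qquad\text{for all }x,y\in I.
\end{equation*}
Since $I$ is an ideal, $yx\in I$ whenever $x,y\in I$, and using the listed identity $xo(yx)=(xoy)x$ we may rewrite the hypothesis as $d\big((xoy)x\big)=0$. Expanding by the semiderivation rule in the form $d(ab)=d(a)g(b)+ad(b)$ gives $d(xoy)\,g(x)+(xoy)d(x)=0$, and since $d(xoy)=0$ this collapses to
\begin{equation*}
(xoy)d(x)=0,\qquad\text{for all }x,y\in I.
\end{equation*}

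Next I would substitute $ry$ (with $r\in R$, so $ry\in I$) for $y$ in the last relation and invoke the identity $xo(ry)=r(xoy)+[x,r]y$, obtaining $r(xoy)d(x)+[x,r]yd(x)=0$; the first summand vanishes by the previous step, so
\begin{equation*}
[x,r]yd(x)=0,\qquad\text{for all }x,y\in I,\ r\in R.
\end{equation*}
This is precisely equation~(\ref{e3}) in the proof of \thmref{theorem1}, so the remainder is identical: replacing $y$ by $yx$ gives $[x,r]yxd(x)=0$, right multiplication of the displayed identity by $x$ gives $[x,r]yd(x)x=0$, and subtracting yields $[x,r]y[x,d(x)]=0$ for all $x,y\in I$, $r\in R$. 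Finally, taking $r=d(x)$ produces $[x,d(x)]\,y\,[x,d(x)]=0$ for all $x,y\in I$, whence \lemref{lemma1} (applied with $a=[x,d(x)]$) forces $[x,d(x)]=0$ for all $x\in I$.

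I do not expect a genuine obstacle here. The only point requiring care is the Jordan-product bookkeeping in the first two substitutions, and in particular choosing the $d(ab)=d(a)g(b)+ad(b)$ form of the semiderivation rule (rather than $d(ab)=d(a)b+g(a)d(b)$) so that the $g$-twisted term is exactly $d(xoy)g(x)$ and drops out immediately. Note that neither property (ii) in the definition of a semiderivation nor the surjectivity of $g$ is used: once the identity $[x,r]yd(x)=0$ is in hand the map $g$ is irrelevant, and the argument coincides with the one already established for the case $d([x,y])=0$.
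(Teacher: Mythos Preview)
Your proposal is correct and follows essentially the same route as the paper: substitute $yx$ for $y$ to reach $(xoy)d(x)=0$, then substitute $ry$ to obtain $[x,r]yd(x)=0$, and finish with the argument of \thmref{theorem1}. Your added remark about needing the $d(ab)=d(a)g(b)+ad(b)$ form of the rule is a detail the paper leaves implicit, but otherwise the proofs coincide.
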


\begin{proof}
By our hypothesis, we get%
\begin{equation}
d(xoy)=0,\text{ for all }x,y\in I.  \label{e4}
\end{equation}%
Writting $yx$ for $y$ in (\ref{e4}) and using (\ref{e4}), we obtain that 
\begin{equation}
(xoy)d(x)=0,\text{ for all }x,y\in I.  \label{5}
\end{equation}%
Substituting $ry,r\in R$ for $y$ in this equation and using this, we arrive
at 
\begin{equation*}
\lbrack x,r]yd(x)=0,\text{ for all }x,y\in I,r\in R.
\end{equation*}%
Using the same arguments after (\ref{e3}) in the proof of Theorem 1, we get
the required result.
\end{proof}

\begin{theorem}
\label{theorem3}Let $R$ be a semiprime ring, $I$ be a nonzero ideal of $R$
and $d$ be a multiplicative semiderivation associated with a nonzero
epimorphism $g$ of $R$. If $d(xy)\pm xy=0,$ for all $x,y\in I,$ then $%
[x,d(x)]=0,$ for all $x\in I.$
\end{theorem}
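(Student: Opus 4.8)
The plan is to treat the case $d(xy)+xy=0$, i.e.\ $d(xy)=-xy$ for all $x,y\in I$; the case $d(xy)-xy=0$ is identical after replacing the element $a(x)$ introduced below by $d(x)+g(x)-x$. Put $a(x):=d(x)-g(x)+x$. First I would manufacture a package of polynomial identities in the spirit of Theorems \ref{theorem1} and \ref{theorem2}: replacing $y$ by $yz$ ($z\in I$) and using the hypothesis gives $d(xyz)=-xyz$, while expanding $d(xyz)$ through axiom (i) — once as $d((xy)z)$ and once as $d(x(yz))$, using in turn each of the two forms and the hypothesis to evaluate $d(xy)$ and $d(yz)$ — and comparing with $-xyz$ yields
\[
g(x)g(y)d(z)=0,\qquad a(x)\,yz=0,\qquad xy\,a(z)=0
\]
for all $x,y,z\in I$; in particular $a(x)I^{2}=I^{2}a(x)=0$ and $g(I)g(I)d(I)=0$.

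Next I would boil these down by semiprimeness. Since $I$ is an ideal, $Ia(x)I$ is a two-sided ideal with $(Ia(x)I)^{2}\subseteq Ia(x)(I^{2}a(x))I=0$, so $Ia(x)I=0$; then $a(x)I$ and $Ia(x)$ are one-sided ideals of square zero and so vanish, whence $a(x)$ lies in the two-sided annihilator $\mathrm{Ann}(I)$ and $xa(x)=a(x)x=0$ for all $x\in I$. For $g(x)g(y)d(z)=0$ I would use that $g$ is an epimorphism: replacing $y$ by $ry$ ($r\in R$, so $ry\in I$) and invoking the surjectivity of $g$ gives $g(x)\,R\,g(y)\,d(z)=0$ for all $x,y,z\in I$; regrouping, the element $u=g(x)d(z)g(y)$ then satisfies $uRu\subseteq g(x)d(z)\bigl(g(y)\,R\,g(x)\,d(z)\bigr)g(y)=0$, so $u=0$ by semiprimeness, and therefore $g(I)d(z)$ and $d(z)g(I)$ are one-sided ideals of square zero, giving $g(x)d(z)=d(z)g(y)=0$ for all $x,y,z\in I$. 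Writing $J=g(I)$ — an ideal of $R$, since $g$ is onto — this says $d(z)\in\mathrm{Ann}(J)$ for every $z\in I$.

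Finally I would combine the two conclusions to annihilate $[x,g(x)]$. Since $x^{2}\in I$, $a(x^{2})\in\mathrm{Ann}(I)$; but $a(x^{2})=d(x^{2})-g(x)^{2}+x^{2}=-x^{2}-g(x)^{2}+x^{2}=-g(x)^{2}$, so $g(x)^{2}\in\mathrm{Ann}(I)$. On the other hand $d(x)\in\mathrm{Ann}(J)$ and $g(x)\in J$ force $d(x)g(x)=g(x)d(x)=0$; substituting $d(x)=g(x)-x+a(x)$ into $d(x)g(x)=0$ gives $xg(x)=g(x)^{2}+a(x)g(x)$, which lies in $\mathrm{Ann}(I)$ (both summands do), while $xg(x)\in I$ since $I$ is an ideal, so $xg(x)\in I\cap\mathrm{Ann}(I)=0$; symmetrically $g(x)x=0$. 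Hence $[x,g(x)]=0$ and
\[
[x,d(x)]=[x,\,g(x)-x+a(x)]=[x,g(x)]+[x,a(x)]=0,
\]
since $[x,a(x)]=xa(x)-a(x)x=0$. The real obstacle is the epimorphism $g$: for $g=\mathrm{id}$ the problem collapses to the derivation case and $[x,g(x)]=[x,x]=0$ is automatic, whereas in general one must genuinely use that $g$ is \emph{surjective} — so that $g(I)$ is again an ideal — to drive the information about $d$ into the annihilator of an ideal and then apply $I\cap\mathrm{Ann}(I)=0$; everything else is the same semiprime-ring bookkeeping as in Theorems \ref{theorem1} and \ref{theorem2}.
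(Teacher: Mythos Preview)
Your argument is correct, but it takes a noticeably longer route than the paper's. Both proofs begin the same way: from $d(xyz)=\mp xyz$ one extracts $g(x)g(y)d(z)=0$, and then surjectivity of $g$ together with semiprimeness yields $g(x)d(z)=0$ for all $x,z\in I$. At this point the paper finishes in two lines: since $g(x)d(z)=0$ one has $d(xz)=d(x)z+g(x)d(z)=d(x)z$, so the hypothesis collapses to $(d(x)\pm x)y=0$ for all $y\in I$; semiprimeness then forces $y(d(x)\pm x)=0$ as well, hence $[d(x)\pm x,\,y]=0$ and in particular $[d(x),x]=0$. You instead carry along the auxiliary element $a(x)=d(x)-g(x)+x$, prove $a(x)\in\mathrm{Ann}(I)$ directly from the other two expansions of $d(xyz)$, and then spend the last third of the proof showing $xg(x)=g(x)x=0$ via $a(x^{2})=-g(x)^{2}$ and $I\cap\mathrm{Ann}(I)=0$, in order to split $[x,d(x)]=[x,g(x)]+[x,a(x)]$. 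This is valid and yields some extra information (e.g.\ $g(x)^{2}\in\mathrm{Ann}(I)$, $xg(x)=0$) that the paper never needs, but once you have $g(x)d(z)=0$ the shortcut $d(xz)=d(x)z$ makes the detour through $a(x)$ and $[x,g(x)]$ unnecessary.
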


\begin{proof}
If $d=0,$ then we get $xy=0,$ for all $x,y\in I,$ and so $x\in I\cap
ann(I)=(0),$ for all $x\in I.$ Since $I$ is a nonzero ideal of $R$, we
assume that $d\neq 0.$

By our hypothesis, we get%
\begin{equation}
d(xy)\pm xy=0,\text{ for all }x,y\in I.  \label{e6}
\end{equation}%
Replacing $yz$ by $y$ in (\ref{e6}), we get%
\begin{equation}
(d(xy)\pm xy)z+g(xy)d(z)=0,
\end{equation}%
and so 
\begin{equation*}
g(xy)d(z)=0.
\end{equation*}%
That is%
\begin{equation}
g(x)g(y)d(z)=0,\text{ for all }x,y\in I.  \label{e7}
\end{equation}%
Taking $d(r)y,r\in R$ instead of $y$ in this equation and using\ $dg=gd$, it
reduces to%
\begin{equation*}
g(x)d(g(r))g(y)d(z)=0.
\end{equation*}%
Since $g$ is an epimorphism of $R,$ we have%
\begin{equation*}
g(x)d(r)g(y)d(z)=0,\text{ for all }x,y,z\in I.
\end{equation*}%
This implies that 
\begin{equation*}
g(x)d(z)g(y)d(z)=0,\text{ for all }x,y,z\in I.
\end{equation*}%
Writting $ty,t\in R$ for $y$ in this equation and using $g$ is surjective,
we obtain that%
\begin{equation*}
g(x)d(z)Rg(y)d(z)=(0),\text{ for all }x,y,z\in I.
\end{equation*}%
In particulary, we can write 
\begin{equation*}
g(x)d(z)Rg(x)d(z)=(0),\text{ for all }x,z\in I,r\in R
\end{equation*}%
and so%
\begin{equation*}
g(x)d(z)=0,\text{ for all }x,z\in I.
\end{equation*}%
Using this in the following equation, we have $d(xz)=d(x)z+g(x)d(z)=d(x)z,$
and so%
\begin{equation*}
d(xz)=d(x)z,\text{ for all }x,z\in I.
\end{equation*}%
Returning our hypothesis and using this, we find that%
\begin{equation}
(d(x)\pm x)y=0,\text{ for all }x,y\in I.  \label{e8}
\end{equation}%
and so 
\begin{equation*}
y(d(x)\pm x)Ry(d(x)\pm x)=(0),\text{ for all }x,y\in I.
\end{equation*}%
Since $R$ is semiprime ring, we get%
\begin{equation}
y(d(x)\pm x)=0,\text{ for all }x,y\in I.  \label{e9}
\end{equation}%
Comparing (\ref{e8}) and (\ref{e9}), we arrive at%
\begin{equation*}
\lbrack (d(x)\pm x),y]=0,
\end{equation*}%
and so%
\begin{equation*}
\lbrack (d(x)\pm x),x]=0.
\end{equation*}%
It reduces to%
\begin{equation*}
\lbrack d(x),x]=0,\text{ for all }x\in I.
\end{equation*}%
This completes the proof.
\end{proof}

\begin{theorem}
\label{theorem4}Let $R$ be a semiprime ring, $I$ be a nonzero ideal of $R$
and $d$ be a multiplicative semiderivation associated with a nonzero
epimorphism $g$ of $R$. If $d(xy)\pm yx=0,$ for all $x,y\in I,$ then $%
[x,d(x)]=0,$ for all $x\in I.$
\end{theorem}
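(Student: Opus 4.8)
I would adapt the proof of Theorem~\ref{theorem3}; the new feature is the reversed product $yx$, which destroys the immediate cancellation used there. First dispose of the degenerate case as in Theorem~\ref{theorem3}: if $d=0$ the hypothesis forces $yx=0$ for all $x,y\in I$, so $Ix=(0)$ and hence $xRx\subseteq Ix=(0)$ for every $x\in I$ (since $xR\subseteq I$), whence $x=0$ by semiprimeness, a contradiction. So assume $d\neq 0$.

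Now substitute $y\to yz$ in $d(xy)\pm yx=0$ and expand $d(xyz)=d(xy)z+g(xy)d(z)$; replacing $d(xy)$ by $\mp yx$ from the hypothesis, the terms $\mp yxz$ and $\pm yzx$ leave a commutator instead of cancelling, yielding
\[
g(x)g(y)d(z)=\pm\,y[x,z],\qquad\text{for all }x,y,z\in I,
\]
which is where the argument parts company with Theorem~\ref{theorem3} (there the analogous step gave $g(xy)d(z)=0$). Putting $x=z$ kills the commutator, $g(z)g(y)d(z)=0$, and the distillation of Theorem~\ref{theorem3} now applies: replace $y$ by $d(r)y$ with $r\in R$, use $gd=dg$ and surjectivity of $g$ to reach $g(z)d(t)g(y)d(z)=0$ for all $t\in R$, take $t=z$, replace $y$ by $ty$, use surjectivity once more to get $g(z)d(z)\,R\,g(y)d(z)=(0)$, and with $y=z$ semiprimeness gives $g(z)d(z)=0$ for all $z\in I$. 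Comparing $d(x^{2})=d(x)x+g(x)d(x)$ with the hypothesis at $y=x$ then yields $d(x)x=\mp x^{2}$.

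It remains to convert this into $[x,d(x)]=0$, which is the delicate part. Comparing further substitutions in the hypothesis ($x\to xz$ versus $y\to yz$, using both forms of the product rule) produces cyclic identities such as $zyx=yxz$ on $I$, hence $x[x,z]=0$ for all $x,z\in I$; extending the variables over $R$ and applying Lemma~\ref{lemma1} (equivalently, semiprimeness together with $I\cap ann(I)=(0)$) repeatedly, one pushes this to commutativity of $I$. Once $[x,z]=0$ on $I$, the displayed identity becomes $g(x)g(y)d(z)=0$, which distills exactly as in Theorem~\ref{theorem3} to $g(x)d(z)=0$ for all $x,z\in I$, so $d(xz)=d(x)z$; feeding this into the hypothesis and using $yx=xy$ gives $(d(x)\pm x)y=0$ for all $x,y\in I$, and the endgame of Theorem~\ref{theorem3} (semiprimeness forces $y(d(x)\pm x)=0$ too, so $[d(x)\pm x,y]=0$ on $I$, then $y=x$) gives $[d(x),x]=0$ for all $x\in I$. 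The main obstacle, as this shows, is neutralising the term $\pm y[x,z]$ and upgrading the partial commutator identities to full commutativity of $I$; once that is done the proof of Theorem~\ref{theorem3} is reused almost verbatim.
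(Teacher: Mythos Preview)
Your plan is correct and follows the paper's argument closely: derive $g(xy)d(z)=\pm y[x,z]$ by substituting $y\to yz$, distill to $g(z)d(z)=0$, obtain $z[x,z]=0$ on $I$, upgrade this to $[I,I]=0$, and then reduce to the endgame of Theorem~\ref{theorem3}. The only cosmetic difference is your route to $z[x,z]=0$ via the cyclic relation $zyx=yxz$; the paper instead sets $y=z$ in $g(xy)d(z)=\pm y[x,z]$ and uses $g(z)d(z)=0$ directly, so your intermediate observation $d(x)x=\mp x^{2}$ is not actually needed.
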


\begin{proof}
If $d=0,$ then we get $yx=0,$ for all $x,y\in I$ and so $x\in I\cap
ann(I)=(0),$ for all $x\in I.$ Since $I$ is a nonzero ideal of $R$, we
assume that $d\neq 0.$

Assume that

\begin{equation}
d(xy)\pm yx=0,\text{\ for all }x,y\in I.  \label{e10}
\end{equation}%
Taking $yz$ instead of $y$ in this equation, we have%
\begin{equation*}
d(xy)z+g(xy)d(z)\pm yzx=0,\text{ for all }x,y,z\in I.
\end{equation*}%
For all $x,y,z\in I,$ we can write this equation%
\begin{equation*}
d(xy)z+g(xy)d(z)\pm yzx\mp yxz\pm yxz=0,\text{ for all }x,y,z\in I
\end{equation*}%
and so 
\begin{equation*}
(d(xy)\pm yx)z+g(xy)d(z)-y[x,z]=0,\text{ for all }x,y,z\in I.
\end{equation*}%
Using the hypothesis, we arrive at%
\begin{equation}
g(xy)d(z)-y[x,z]=0,\text{ for all }x,y,z\in I.  \label{e11}
\end{equation}%
Replacing $x$ by $z$ in (\ref{e11}) and using this, we get%
\begin{equation*}
g(xy)d(x)=0,\text{ for all }x,y\in I.
\end{equation*}%
Writting $d(t)ry,t,r\in R$ for $y$ in this equation and using $g$ is
surjective, we obtain that%
\begin{equation*}
g(x)g(d(t))Rg(y)d(x)=(0).
\end{equation*}%
Using $dg=gd,$ we have%
\begin{equation*}
g(x)d(g(t))Rg(y)d(x)=(0).
\end{equation*}%
We can write this equation using $g$ is surjective 
\begin{equation*}
g(x)d(r)Rg(y)d(x)=(0),
\end{equation*}%
and so%
\begin{equation*}
g(x)d(x)Rg(x)d(x)=(0).
\end{equation*}%
Hence we find that%
\begin{equation*}
g(x)d(x)=0,\text{ for all }x\in R.
\end{equation*}%
Now, let return (\ref{e11}). Writing $z$ by $y$ in this equation and using $%
g(z)d(z)=0,$ we arrive at%
\begin{equation}
z[x,z]=0,\text{ for all }x,z\in I.  \label{eq-11}
\end{equation}%
Replacing $yx$ by $x$ in this equation and using this, we get%
\begin{equation*}
zy[x,z]=0,\text{ for all }x,y,z\in I,
\end{equation*}%
and so%
\begin{equation}
zyr[x,z]=0,\text{ for all }x,z\in I,r\in R.  \label{eq-12}
\end{equation}%
(\ref{eq-11}) gives that%
\begin{equation}
yzr[x,z]=0,\text{ for all }x,y,z\in I,r\in R.  \label{eq-13}
\end{equation}%
Subtracting (\ref{eq-13}) form (\ref{eq-12}), we arrive at%
\begin{equation*}
\lbrack y,z]r[x,z]=0,\text{ for all }x,y,z\in I,r\in R,
\end{equation*}%
and so%
\begin{equation*}
\lbrack x,z]R[x,z]=(0),\text{ for all }x,,y,z\in I.
\end{equation*}%
By the semiprimeness of $R,$ we get%
\begin{equation*}
\lbrack x,z]=(0),\text{ for all }x,z\in I
\end{equation*}%
Using this equation in (\ref{e11}), we have%
\begin{equation*}
g(xy)d(z)=0,\text{ for all }x,y\in I.
\end{equation*}
This is the same as (\ref{e7}) in the proof of Theorem 3. Applying the same
arguments as we used in the last paragraph of the proof of Theorem 3, we get
the required result.
\end{proof}

\begin{theorem}
\label{theorem5}Let $R$ be a semiprime ring, $I$ be a nonzero ideal of $R$
and $d$ be a multiplicative semiderivation associated with a nonzero
epimorphism $g$ of $R$. If $d(x)d(y)\pm xy=0,$ for all $x,y\in I,$ then $%
[x,d(x)]=0,$ for all $x\in I.$
\end{theorem}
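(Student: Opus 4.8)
I would begin by ruling out $d=0$: then the hypothesis reads $xy=0$ for all $x,y\in I$, so $x\in I\cap ann(I)=(0)$ for every $x\in I$, contradicting $I\neq 0$. Hence assume $d\neq 0$ and write the hypothesis as $d(x)d(y)=\mp xy$ on $I$.

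The first real move is to produce the analogue of the relation $g(x)g(y)d(z)=0$ used in Theorems 3 and 4. Replacing $y$ by $yz$ in the hypothesis and expanding $d(yz)=d(y)z+g(y)d(z)$, the hypothesis kills the term $d(x)d(y)z$ and leaves
\[
d(x)\,g(y)\,d(z)=0,\qquad x,y,z\in I .
\]
I would then strip this down exactly as in the last paragraph of the proof of Theorem 3: substitute $d(r)y$ (for $r\in R$) for $y$, use condition (ii) in the form $g(d(r))=d(g(r))$ together with the surjectivity of $g$ to get $d(x)d(s)g(y)d(z)=0$ for all $s\in R$, then specialise $s\in I$ and use $d(x)d(s)=\mp xs$ to obtain $xs\,g(y)d(z)=0$ for all $x,s,y,z\in I$. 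Thus $I^{2}c=0$ where $c=g(y)d(z)\in R$; a short semiprimeness argument — for $t\in I$, $(tc)R(tc)=t(cRt)c\subseteq tIc\subseteq I^{2}c=0$, since $cRt\subseteq cI\subseteq I$ — then gives
\[
x\,g(y)\,d(z)=0,\qquad x,y,z\in I .
\]
(One cannot hope for $g(y)d(z)=0$ itself, since $g(y)d(z)$ need not lie in $I$; the displayed relation is what is actually needed.)

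From this I would build a Posner-type identity inside $I$. Condition (i) gives $x\,d(wy)=x\,d(w)y+x\,g(w)d(y)=x\,d(w)y$ for all $x,w,y\in I$. Left-multiplying the hypothesis $d(w)d(y)=\mp wy$ by $x\in I$ and replacing $w$ by $w_{1}w_{2}$ yields $x\,d(w_{1})w_{2}\,d(y)=\mp x\,w_{1}w_{2}\,y$, so $x$ annihilates the element $d(w_{1})w_{2}d(y)\pm w_{1}w_{2}y$ of $I$ on the left; since any $b\in I$ with $Ib=0$ satisfies $bIb\subseteq Ib=0$ and hence $b=0$ by Lemma 1, this gives the clean identity
\[
d(w_{1})\,w_{2}\,d(y)=\mp w_{1}w_{2}y,\qquad w_{1},w_{2},y\in I ,
\]
and in particular, taking $w_{1}=y=x$, the relation $d(x)\,v\,d(x)=\mp xvx$ for all $x,v\in I$.

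The conclusion is then immediate. Put $a=[d(x),x]=d(x)x-xd(x)\in I$. Expanding $a\,v\,a$ for $v\in I$ into its four products and replacing each occurrence of $d(x)(\,\cdot\,)d(x)$ by $\mp x(\,\cdot\,)x$ (with the middle factor $xv$, $xvx$, $v$, $vx$ in turn) makes every term equal to $\pm x^{2}vx^{2}$, and the four terms cancel in pairs; thus $[d(x),x]\,v\,[d(x),x]=0$ for all $v\in I$, and Lemma 1 yields $[d(x),x]=0$ for all $x\in I$. The step I expect to be the real obstacle is the second one — passing from the ``double-$d$'' identity $d(x)g(y)d(z)=0$ to the one-sided relation $xg(y)d(z)=0$, which needs the surjectivity of $g$, the hypothesis, and some care with semiprimeness, along with the repeated use of the auxiliary fact that an element of $I$ left-annihilated by $I$ must vanish; after that everything is a short computation.
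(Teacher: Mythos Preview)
Your argument is correct, but it takes a noticeably longer path than the paper's. After reaching the common relation $d(x)\,g(y)\,d(z)=0$, the paper simply replaces $y$ by $ry$ ($r\in R$) and uses surjectivity of $g$ to get $d(x)\,R\,g(y)\,d(z)=0$; setting $z=x$ and left-multiplying by $g(y)$ gives $g(y)d(x)\,R\,g(y)d(x)=0$, so semiprimeness yields the \emph{unqualified} relation $g(y)d(x)=0$ on $I$. This immediately gives $d(xy)=d(x)y$ on $I$, and comparing the hypothesis right-multiplied by $y$ with the hypothesis at $x\mapsto xy$ produces $d(x)[d(y),y]=0$, from which the standard ``insert a factor and subtract'' trick finishes.

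You instead work harder to obtain only the left-annihilated version $x\,g(y)\,d(z)=0$, and then compensate by deriving the stronger three-variable identity $d(w_1)w_2d(y)=\mp w_1w_2y$ and expanding $[d(x),x]\,v\,[d(x),x]$ directly. Both routes are valid; the paper's buys the clean fact $g(y)d(x)=0$ in one step (no need for the $I^2c=0$ argument or the auxiliary ``$Ib=0\Rightarrow b=0$'' device), while yours avoids the intermediate Posner-type relation $d(x)[d(y),y]=0$ at the cost of a four-term cancellation. The step you flagged as the obstacle is in fact the one the paper handles most efficiently, so it is worth noting that the substitution $y\mapsto ry$ already suffices there.
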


\begin{proof}
If $d=0,$ then we get $xy=0,$ for all $x,y\in I.$ We had done in the proof
of Theorem 3. So, we have $d\neq 0.$

By our hypothesis, we get%
\begin{equation}
d(x)d(y)\pm xy=0,\text{ for all }x,y\in I.
\end{equation}%
Replacing $yz$ by $y$ in this equation and using the hypothesis, we get%
\begin{equation*}
d(x)d(y)z+d(x)g(y)d(z)\pm xyz=0,
\end{equation*}%
and so%
\begin{equation*}
d(x)g(y)d(z)=0,\text{ for all }x,y,z\in I.
\end{equation*}%
Taking $ry,r\in R$ instead of $y$ in this equation and using $g$ is an
epimorphism, we have%
\begin{equation*}
d(x)Rg(y)d(z)=(0),
\end{equation*}%
and so%
\begin{equation*}
g(y)d(x)Rg(y)d(x)=(0),\text{ for all }x,y\in I.
\end{equation*}%
By the semiprimeness of $R$, we obtain that%
\begin{equation}
g(y)d(x)=0,\text{ for all }x,y\in I.  \label{e12}
\end{equation}%
Hence we get $d(xy)=d(x)y+g(x)d(y)=d(x)y,$and so%
\begin{equation}
d(xy)=d(x)y,\text{ for all }x,y\in I.  \label{e13}
\end{equation}%
On the other hand, right multiplying our hypothesis with $y,$ we get%
\begin{equation}
d(x)d(y)y\pm xy^{2}=0,\text{ for all }x,y\in I.  \label{e14}
\end{equation}%
Now, writing $xy$ by $x$ in the hypothesis and using\ (\ref{e13}), we find
that%
\begin{equation}
d(x)yd(y)\pm xy^{2}=0,\text{ for all }x,y\in I.  \label{e15}
\end{equation}%
Subtracting (\ref{e14}) from (\ref{e15}), we obtain that%
\begin{equation*}
d(x)[d(y),y]=0,\text{ for all }x,y\in I.
\end{equation*}%
Replacing $xz$ by $x$ in this equation and using this, we get%
\begin{equation*}
d(x)z[d(y),y]=0,\text{ for all }x,y,z\in I.
\end{equation*}%
It follows that%
\begin{equation*}
\lbrack d(y),y]z[d(y),y]=0,\text{ for all }x,y,z\in I.
\end{equation*}%
By Lemma 1, we get $[d(y),y]=0,$ for all $y\in I.$.
\end{proof}

\begin{theorem}
\label{theorem6}Let $R$ be a semiprime ring, $I$ be a nonzero ideal of $R$
and $d$ be a multiplicative semiderivation associated with a nonzero
epimorphism $g$ of $R$. If $d(x)d(y)\pm yx=0,$ for all $x,y\in I,$ then $%
[x,d(x)]=0,$ for all $x\in I.$
\end{theorem}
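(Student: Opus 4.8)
The plan is to run the same machine as in the proofs of \thmref{theorem4} and \thmref{theorem5}. First I would dispose of the trivial case: if $d=0$ the hypothesis becomes $yx=0$ for all $x,y\in I$, so each $x\in I$ satisfies $Ix=0$, whence $xRx=0$, and semiprimeness forces $I=0$, a contradiction; thus $d\neq 0$. Next I would substitute $yz$ for $y$ in $d(x)d(y)\pm yx=0$ and expand $d(yz)=d(y)z+g(y)d(z)$; using $d(x)d(y)=\mp yx$, the resulting identity rearranges to
\[
d(x)g(y)d(z)=\pm\, y[x,z]\qquad\text{for all }x,y,z\in I.
\]
Setting $z=x$ kills the right-hand side, so $d(x)g(y)d(x)=0$ for all $x,y\in I$.

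The next step is to exploit that $g$ is a ring epimorphism. Replacing $y$ by $ry$ with $r\in R$ (which is still in $I$) and using $g(ry)=g(r)g(y)$ together with $g(R)=R$ gives $d(x)\,R\,g(y)d(x)=0$; left-multiplying by $g(y)$ turns this into $uRu=0$ with $u=g(y)d(x)$, so semiprimeness yields $g(y)d(x)=0$ for all $x,y\in I$. Substituting this back into the displayed relation makes the factor $g(y)d(z)$ vanish, leaving $y[x,z]=0$ for all $x,y,z\in I$; since $[x,z]\in I$, choosing $y=[x,z]$ gives $[x,z]\,w\,[x,z]=[x,z]\bigl(w[x,z]\bigr)=0$ for all $w\in I$, so \lemref{lemma1} forces $[x,z]=0$ for all $x,z\in I$. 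Hence $I$ is commutative, the hypothesis now reads $d(x)d(y)\pm xy=0$ on $I$, and \thmref{theorem5} applies verbatim to give $[x,d(x)]=0$ for all $x\in I$. Alternatively one can avoid quoting \thmref{theorem5}: since $g(y)d(x)=0$ also gives $d(xy)=d(x)y$ on $I$, comparing $d(x)d(y)y$ with $d(xy)d(y)=d(x)yd(y)$ (both equal to $\mp yxy$) yields $d(x)[d(y),y]=0$, and then $[d(y),y]=0$ follows exactly as in the closing lines of the proof of \thmref{theorem5}.

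The main obstacle, as usual in this circle of arguments, is the passage from $d(x)g(y)d(x)=0$ to $g(y)d(x)=0$: one must keep track that $ry$ remains inside the ideal $I$, that $g$ respects products so that $g(ry)=g(r)g(y)$, and---crucially---that it is the surjectivity of $g$ that lets $g(r)$ sweep out all of $R$, which is precisely what is needed before semiprimeness can be applied to $u=g(y)d(x)$. Once that identity is in hand, the collapse of the key relation to commutativity of $I$, and the reduction to \thmref{theorem5}, are routine.
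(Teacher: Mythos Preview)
Your argument is correct and essentially matches the paper's. Indeed, your ``alternative'' route---deriving $g(y)d(x)=0$, hence $d(xy)=d(x)y$ on $I$, then comparing $d(x)d(y)y\pm yxy=0$ with $d(x)yd(y)\pm yxy=0$ to obtain $d(x)[d(y),y]=0$ and finishing as in \thmref{theorem5}---is precisely the paper's proof, while your main route (first extracting the three-variable identity $d(x)g(y)d(z)=\pm y[x,z]$, feeding $g(y)d(z)=0$ back into it to get $y[x,z]=0$ and hence $[x,z]=0$ on $I$, and then quoting \thmref{theorem5}) is a minor and equally valid variant.
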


\begin{proof}
Using the same arguments begining of the proof of Theorem 3, we must have $%
d\neq 0.$

By our hypothesis, we get%
\begin{equation}
d(x)d(y)\pm yx=0,\text{ for all }x,y\in I.
\end{equation}%
Replacing $yx$ by $y$ in this equation and using this, we get%
\begin{equation*}
d(x)g(y)d(x)=0,\text{ for all }x,y\in I.
\end{equation*}%
Writing $ry,r\in R$ instead of $y$ in this equation and using $g$ is an
epimorphism, we have%
\begin{equation*}
d(x)Rg(y)d(z)=(0).
\end{equation*}%
In particulary, we get%
\begin{equation*}
g(y)d(x)Rg(y)d(x)=(0),
\end{equation*}%
and so%
\begin{equation*}
g(y)d(x)=0,\text{ for all }x,y\in I.
\end{equation*}%
Hence we have $d(xy)=d(x)y+g(x)d(y)=d(x)y,$and so $d(xy)=d(x)y,$ for all $%
x,y\in I.$

Now, right multiplying our hypothesis with $y,$ we get%
\begin{equation*}
d(x)d(y)y\pm yxy=0,\text{ for all }x,y\in I.
\end{equation*}%
Taking $xy$ by $x$ in the hypothesis and using $d(xy)=d(x)y,$ we have%
\begin{equation*}
d(x)yd(y)\pm yxy=0,\text{ for all }x,y\in I.
\end{equation*}%
Comparing the last two equations, we obtain that%
\begin{equation*}
d(x)[d(y),y]=0,\text{ for all }x,y\in I.
\end{equation*}

Applying the same arguments as used the end of the proof of Theorem 5, we
arrive at $[x,d(x)]=0,$ for all $x\in I.$
\end{proof}

\begin{theorem}
\label{theorem7}Let $R$ be a semiprime ring, $I$ be a nonzero ideal of $R$
and $d$ be a multiplicative semiderivation associated with a nonzero
epimorphism $g$ of $R$. If $d$ acts as a homomorphism on $I,$ then $%
[x,d(x)]=0,$ for all $x\in I.$
\end{theorem}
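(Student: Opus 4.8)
The plan is to rewrite the hypothesis using both expressions for a semiderivation, extract two annihilator identities, and then run a short chain of substitutions — all legitimate because $I$ is a two-sided ideal — until a commutator is ``sandwiched'' and Lemma 1 finishes the job.

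First, saying that $d$ acts as a homomorphism on $I$ means $d(xy)=d(x)d(y)$ for all $x,y\in I$. Comparing this with $d(xy)=d(x)g(y)+xd(y)$ gives
\[
xd(y)=d(x)\bigl(d(y)-g(y)\bigr),\qquad x,y\in I,
\]
and comparing it with $d(xy)=d(x)y+g(x)d(y)$ gives $d(x)y=\bigl(d(x)-g(x)\bigr)d(y)$ for all $x,y\in I$. In the first identity I would replace $x$ by $zx$ (with $z\in I$) and use $d(zx)=d(z)d(x)$ together with the identity itself; this collapses to $zxd(y)=d(z)xd(y)$, i.e.
\[
(d(z)-z)\,x\,d(y)=0,\qquad x,y,z\in I .
\]
Symmetrically, replacing $y$ by $yz$ in the second identity (and using $d(yz)=d(y)d(z)$) yields $d(x)\,y\,(d(z)-z)=0$ for all $x,y,z\in I$. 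Specializing $y=z$ in the former and $x=z$ in the latter and combining the two resulting equations produces the single clean relation
\[
zxd(z)=d(z)xz,\qquad x,z\in I .
\]

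From this relation everything is bookkeeping. Replacing $x$ by $xz$ and comparing with the relation right-multiplied by $z$ gives $zx[z,d(z)]=0$; replacing $x$ by $xd(z)$ and comparing with the relation right-multiplied by $d(z)$ gives $d(z)x[z,d(z)]=0$; both for all $x,z\in I$. Then I would expand
\[
[z,d(z)]\,x\,[z,d(z)]=zd(z)\,x\,[z,d(z)]-d(z)z\,x\,[z,d(z)],
\]
kill the first term by applying $zx[z,d(z)]=0$ with $d(z)x$ in place of $x$, and kill the second by applying $d(z)x[z,d(z)]=0$ with $zx$ in place of $x$. Hence $[z,d(z)]\,x\,[z,d(z)]=0$ for all $x,z\in I$, and Lemma 1 gives $[z,d(z)]=0$ for every $z\in I$.

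The only real obstacle is organizational: reaching the identity $zxd(z)=d(z)xz$ and then choosing exactly the right four substitutions/multiplications so that the two one-sided identities $zx[z,d(z)]=0$ and $d(z)x[z,d(z)]=0$ emerge and recombine into the two-sided form demanded by Lemma 1. At each step one must keep in mind that it is precisely the two-sidedness of $I$ that makes $zx$, $xz$, $d(z)x$, $xd(z)$ legitimate elements of $I$. I note that, in contrast with Theorems 3--6, neither the surjectivity of $g$ nor the compatibility $dg=gd$ seems to be needed for this argument.
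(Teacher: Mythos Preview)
Your argument is correct. Both identities $(d(z)-z)\,x\,d(y)=0$ and $d(x)\,y\,(d(z)-z)=0$ follow exactly as you describe, the combination $zxd(z)=d(z)xz$ is legitimate, and the four substitutions leading to $[z,d(z)]\,x\,[z,d(z)]=0$ are valid because $I$ is two-sided; Lemma~1 then applies.

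Your route differs from the paper's. The paper uses only the form $d(xy)=d(x)y+g(x)d(y)$, obtains $d(x)\,y\,(z-d(z))=0$ (your second identity), and then passes through an intermediate step $g(w)d(x)I=0$ that relies on the epimorphism $g$ and the relation $dg=gd$, before reaching $d(x)[d(y),y]=0$ and finishing with a similar sandwich argument. You instead exploit \emph{both} semiderivation expressions symmetrically, which yields the balanced relation $zxd(z)=d(z)xz$ directly and bypasses $g$ altogether. The payoff, as you observe, is that your proof does not use the surjectivity of $g$ or the compatibility $dg=gd$; it is also shorter and more transparent than the paper's somewhat circuitous path through $g(w)d(x)I=0$.
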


\begin{proof}
Assume that $f$ acts as a homomorphism on $R.$ Then one obtains%
\begin{equation}
d(xy)=d(x)y+g(x)d(y)=d(x)d(y),\text{ for all }x,y\in R.  \label{e16}
\end{equation}%
Replacing $y$ by $yz$ in (\ref{e16}) and using the hypothesis, we have%
\begin{equation*}
d(x)yz+g(x)d(yz)=d(xy)d(z).
\end{equation*}%
Since $d$ is multiplicative semiderivation of $R,$ we get%
\begin{equation*}
d(x)yz+g(x)d(yz)=d(x)yd(z)+g(x)d(y)d(z)
\end{equation*}%
and so%
\begin{equation*}
d(x)yz+g(x)d(yz)=d(x)yd(z)+g(x)d(yz).
\end{equation*}%
That is%
\begin{equation}
d(x)y(z-d(z))=0,\text{ for all }x,y,z\in I.  \label{eq-16}
\end{equation}%
It follows that%
\begin{equation}
d(x)yd(w)(z-d(z))=0,\text{ for all }x,y,z,w\in I.  \label{e17}
\end{equation}%
Returning (\ref{e16}), we can write%
\begin{equation*}
d(x)y+g(x)d(y)=d(x)d(y).
\end{equation*}%
That is%
\begin{equation}
d(x)(y-d(y))=-g(x)d(y),\text{ for all }x,y,z\in I.  \label{e18}
\end{equation}%
We can write from (\ref{e17}) using (\ref{e18}) 
\begin{equation*}
d(x)yg(w)d(z)=0
\end{equation*}%
and so%
\begin{equation*}
g(w)d(x)yRg(w)d(x)y=(0),\text{ for all }x,y,w\in I.
\end{equation*}%
Since $R$ is semiprime, we conclude that $g(w)d(x)I=0,$ for all $x,w\in I.$

Now, right multiplying our hypothesis with $y$ and using $g(x)d(y)y=0,$ we
get%
\begin{equation*}
d(x)(y-d(y))y=0,\text{ for all }x,y\in I.
\end{equation*}%
By (\ref{eq-16}), we can write 
\begin{equation*}
d(x)y(y-d(y))=0,\text{ for all }x,y,z\in I.
\end{equation*}%
Subtracting the last two equations, we get%
\begin{equation}
d(x)[d(y),y]=0,\text{ for all }x,y\in I.  \label{eq-19}
\end{equation}%
Replacing $x$ with $xz$ in (\ref{eq-19}) and using this, we obtain 
\begin{equation}
d(x)z[d(y),y]=0,\text{ for all }x,y\in I  \label{eq-20}
\end{equation}%
which yields that%
\begin{equation*}
xd(x)z[d(y),y]=0,\text{ for all }x,y\in I.
\end{equation*}%
Taking $z$ by $xz$ in (\ref{eq-20}), we get%
\begin{equation*}
d(x)xz[d(y),y]=0,\text{ for all }x,y\in I.
\end{equation*}%
Subtracting the last two equations, we find that 
\begin{equation*}
\lbrack d(x),x]z[d(y),y]=(0),\text{ for all }x,y,z\in I.
\end{equation*}%
In particular, 
\begin{equation*}
\lbrack d(x),x]z[d(x),x]=(0),\text{ for all }x,z\in I.
\end{equation*}%
By Lemma 1, we have $[d(x),x]=(0),$ for all $x\in I.$
\end{proof}

\begin{theorem}
\label{theorem8}Let $R$ be a semiprime ring, $I$ be a nonzero ideal of $R$
and $d$ be a multiplicative semiderivation associated with a nonzero
epimorphism $g$ of $R$. If $d$ acts as an anti-homomorphism on $I,$ then $%
[x,d(x)]=0,$ for all $x\in I.$
\end{theorem}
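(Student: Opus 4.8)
The plan is to mirror the proof of \thmref{theorem7}. If $d=0$ the conclusion is trivial, so suppose $d\neq0$. Since $d$ is a multiplicative semiderivation acting as an anti-homomorphism on $I$, for all $x,y\in I$ we have
\[
d(y)d(x)=d(xy)=d(x)y+g(x)d(y),
\]
an identity $(\star)$ that will play the role of (\ref{e16}) above. The crucial first move is to replace $x$ by $xy$ in $(\star)$: the left side becomes $d(y)d(xy)=d(y)^{2}d(x)$ by the anti-homomorphism property, while the right side, expanded via $d(xy)=d(x)y+g(x)d(y)$ and $g(xy)=g(x)g(y)$ (here we use that $g$ is an epimorphism), becomes $d(x)y^{2}+g(x)d(y)y+g(x)g(y)d(y)$. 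Left-multiplying $(\star)$ by $d(y)$, right-multiplying $(\star)$ by $y$, and subtracting, all terms cancel except
\[
\bigl(d(y)g(x)-g(x)g(y)\bigr)d(y)=0\qquad\text{for all }x,y\in I;
\]
call this identity $(\star\star)$.

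Next I would linearize $(\star\star)$ in $x$. Replacing $x$ by $rx$ (allowed since $I$ is an ideal), subtracting $g(r)$ times $(\star\star)$, and using $g(rx)=g(r)g(x)$ together with surjectivity of $g$, one obtains $[d(y),s]\,g(x)\,d(y)=0$ for all $s\in R$, $x,y\in I$; the commutator identity $[d(y),s_{1}s_{2}]=[d(y),s_{1}]s_{2}+s_{1}[d(y),s_{2}]$ upgrades this to $[d(y),s]\,R\,g(x)\,d(y)=0$, and replacing $x$ by $xr$ then gives $[d(y),s]\,R\,g(x)\,R\,d(y)=0$. Feeding these back into $(\star)$, and using $g(y)d(y)=d(y)^{2}-d(y)y$ (which is $(\star)$ with $x=y$), I expect to reach the relation $g(y)d(x)=0$ for all $x,y\in I$, hence $d(xy)=d(x)y$ on $I$, exactly as in Theorems \ref{theorem5}--\ref{theorem6}. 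Right-multiplying $(\star)$ by $y$ and, separately, substituting $xy$ for $x$ in $(\star)$ and using $d(xy)=d(x)y$, subtraction then yields
\[
d(x)\,[d(y),y]=0\qquad\text{for all }x,y\in I.
\]
From here the endgame is the one used repeatedly above: replacing $x$ by $xz$ gives $d(x)z[d(y),y]=0$; left-multiplying by $x$, also replacing $z$ by $xz$, and subtracting gives $[d(x),x]\,z\,[d(y),y]=0$ for all $x,y,z\in I$; putting $y=x$ and applying \lemref{lemma1} gives $[d(x),x]=0$, i.e.\ $[x,d(x)]=0$, for all $x\in I$.

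The step I expect to be the main obstacle is the passage, after $(\star\star)$, to a relation such as $g(y)d(x)=0$ (equivalently $d(xy)=d(x)y$) on $I$. In the homomorphism case of \thmref{theorem7} the analogous step is easy because $d(xy)=d(x)d(y)$ permits cancellation of an entire term $g(x)d(yz)$; for an anti-homomorphism the factors $g(x),g(y)$ occur in the opposite order, so one cannot simply cancel and must instead exploit in an essential way that $g(R)=R$ and that $g(I)$ is again a nonzero ideal of $R$, closing the argument through the semiprimeness of $R$ via \lemref{lemma1}.
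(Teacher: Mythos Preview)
Your proposal takes an unnecessarily circuitous route and, as you yourself flag, has a genuine gap at the crucial step: from $(\star\star)$ and its consequences you never actually reach $g(y)d(x)=0$ (or even $g(y)d(x)I=0$), and it is not clear that the relations $[d(y),s]\,R\,g(x)\,R\,d(y)=0$ together with $g(y)d(y)=d(y)^{2}-d(y)y$ suffice to get there. The difficulty is real: unlike in the homomorphism case, there is no term of the form $g(x)d(yz)$ that cancels wholesale, so mirroring the proof of \thmref{theorem7} is the wrong template.

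The paper's proof avoids all of this by making the \emph{other} substitution. Instead of replacing $x$ by $xy$ in $(\star)$, replace $y$ by $xy$. Then the anti-homomorphism gives $d(x\cdot xy)=d(xy)\,d(x)$, while the semiderivation rule gives $d(x\cdot xy)=d(x)\,xy+g(x)\,d(xy)$. Expanding $d(xy)d(x)=(d(x)y+g(x)d(y))d(x)=d(x)yd(x)+g(x)d(y)d(x)$ and using $d(y)d(x)=d(xy)$ once more, the $g(x)d(xy)$ terms cancel \emph{exactly}, leaving
\[
d(x)\,xy=d(x)\,y\,d(x),\qquad x,y\in I.
\]
Replacing $y$ by $yx$ here and comparing with the same identity right-multiplied by $x$ gives $d(x)\,y\,[d(x),x]=0$; then left-multiplying by $x$, replacing $y$ by $xy$, and subtracting yields $[d(x),x]\,y\,[d(x),x]=0$, and \lemref{lemma1} finishes. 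Note that the epimorphism $g$ plays no role whatsoever beyond the semiderivation identity, and the intermediate goal $g(y)d(x)=0$ is never needed. The moral: for the anti-homomorphism, the substitution $y\mapsto xy$ (so that the repeated variable $x$ sits on the \emph{outside} of $d(xy)d(x)$) is the one that produces a clean cancellation; your choice $x\mapsto xy$ puts the repeated variable inside and leaves the $g$-terms entangled.
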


\begin{proof}
We have%
\begin{equation}
d(xy)=d(x)y+g(x)d(y)=d(y)d(x),\text{ for all }x,y\in I.  \label{e21}
\end{equation}%
Taking $y$ by $xy$ in this equation, we get%
\begin{equation*}
d(x)xy+g(x)d(xy)=d(xy)d(x).
\end{equation*}%
Since $d$ is a multiplicative semiderivation of $R,$ we have%
\begin{equation*}
d(x)xy+g(x)d(xy)=d(x)yd(x)+g(x)d(y)d(x).
\end{equation*}%
Using the hypothesis, we arrive at%
\begin{equation}
d(x)xy=d(x)yd(x),\text{ for all }x,y\in I.  \label{e22}
\end{equation}%
Replacing $y$ with $yx$ in (\ref{e22}) and using this, we obtain%
\begin{equation}
d(x)y[d(x),x]=0,\text{ for all }x,y\in I.  \label{e23}
\end{equation}%
Left multiplying this equation by $x,$ we get%
\begin{equation*}
xd(x)y[d(x),x]=0,\text{ for all }x,y\in I.
\end{equation*}%
Writing $y$ by $xy$ in (\ref{e23}), we have%
\begin{equation*}
d(x)xy[d(x),x]=0,\text{ for all }x,y\in I.
\end{equation*}%
Subtracting the last two equations, we find that%
\begin{equation*}
\lbrack d(x),x]y[d(x),x]=0,\text{ for all }x,y\in I.
\end{equation*}%
By Lemma 1, we have $[d(x),x]=(0),$ for all $x\in I.$ This completes the
proof.
\end{proof}

\end{document}